\newtheorem{theorem}{Theorem}
\newtheorem{dfn}{Definition}
\newtheorem{lem}{Lemma}
\newtheorem{prop}{Proposition}
\def\bn{\mathbf{n}}
\def\C{\mathbb{C}}
\def\N{\mathbb{N}}
\def\Z{\mathbb{Z}}
\def\di{\displaystyle}
\def\ni{\noindent}
\begin{document}
\title{Lie algebras and geometric complexity of an isochronous center condition}
\author{Jacky Cresson}
\author{Jordy Palafox}

\begin{abstract}
Using the mould formalism introduced by Jean Ecalle, we define and study the geometric complexity of an isochronous center condition. The role played by several Lie ideals is discussed coming from the interplay between the universal mould of the correction and the different Lie algebras generated by the comoulds. This strategy enters in the general program proposed by J. Ecalle and D. Schlomiuk in \cite{es} to study  the size and splitting of some Lie ideals for the linearisability problem.
\end{abstract}

\maketitle

\setcounter{tocdepth}{3}
\tableofcontents

\section{Introduction}

In this article, we are interested in {\bf characterizing isochronous center} of {\bf polynomial real vector fields} which can be written using complex coordinates as (see \cite{f}):
\begin{equation}
\label{form}
X=\xi(x\partial_x -y\partial_y )+P(x,y)\partial_x +Q(y,x)\partial_y,
\end{equation}
where $\xi \in \C$ such that $\xi^2=-1$, $\partial_x=\frac{\partial}{\partial x}$, $\partial_{y}=\frac{\partial}{\partial y}$, $P(x,y)$ is a polynomial of a maximal degree $d$ given by $P(x,y)=\underset{2\leq i+j\leq d}{\sum}p_{i,j}x^iy^j$, with $\bar{y}=x$ and the coefficients $p_{i,j} \in \C$ satisfy $q_{i,j}=\overline{p_{j,i}}$.

\begin{dfn}
A vector field is said to be isochronous if all its orbits in a neighbourhood of an equilibrium point are periodic with the same period.
\end{dfn}

We have a characterization of isochronicity (see \cite{chava}) : 

\begin{theorem}
Such a vector field $X$ is isochronous if and only if $X$ is linearisable by an analytic change of coordinates.
\end{theorem}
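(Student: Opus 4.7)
The plan is to establish the two implications separately: the forward direction is a short geometric observation, while the converse goes through Poincar\'e--Dulac normal form theory together with an analytic convergence argument.

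For $(\Leftarrow)$, suppose an analytic diffeomorphism $\phi$ fixing the origin satisfies $\phi^*X = X_L$ with $X_L = \xi(x\partial_x - y\partial_y)$. Since $\xi^2 = -1$ and the reality condition $\bar y = x$ is imposed, $X_L$ is the standard rotation generator in real coordinates, and every nontrivial nearby orbit is periodic with the same period $2\pi$. An analytic conjugacy transports orbits to orbits preserving their time parameterization, so every nearby orbit of $X$ is likewise periodic with the same period; that is, $X$ is isochronous.

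For $(\Rightarrow)$, I would first note that isochrony implies $X$ has a center at the origin. The eigenvalues $\pm\xi$ admit only the resonance $\xi+(-\xi)=0$, so the resonant monomials for the linear adjoint action are exactly $(xy)^k x\partial_x$ and $(xy)^k y\partial_y$. By Poincar\'e--Dulac there exists a formal tangent-to-identity change of coordinates bringing $X$ to
\[
\hat X = \xi(x\partial_x - y\partial_y) + xA(xy)\partial_x + yB(xy)\partial_y,
\]
with $A,B$ formal power series vanishing at $0$. Setting $u=xy$, one computes $\dot u = u\bigl(A(u)+B(u)\bigr)$, so the center condition (classical vanishing of the Lyapunov focus quantities) forces $A+B\equiv 0$. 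On each level $\{u=c\}$ the angular velocity is then a constant depending only on $c$; isochrony demands this constant be independent of $c$ (vanishing of the classical period constants), which forces $A\equiv B\equiv 0$. Consequently the formal normal form reduces to the linear part and the formal transformation is a formal linearisation of $X$.

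The main obstacle is upgrading this formal linearisation to a convergent, analytic one. My preferred route is to exploit the circle action: isochrony means the time-$T$ flow of $X$ equals the identity near the origin, so $X$ generates a real analytic $\mathbb{S}^1$-action fixing the origin. The analytic version of Bochner's linearisation theorem for compact group actions at a fixed point then produces analytic linearising coordinates, which must coincide with the formal series constructed above and thereby prove its convergence. An equivalent route is to apply Lyapunov's theorem to produce an analytic first integral $H = xy + \cdots$ and combine it with the analytic angle $\theta = 2\pi t/T$ along orbits, yielding analytic action--angle coordinates that linearise $X$ directly.
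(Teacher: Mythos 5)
The paper does not actually prove this statement: it is quoted as a known characterization with a reference to the survey of Chavarriga and Sabatini \cite{chava}, so there is no internal proof to compare against. Your argument is essentially correct and is the standard one. The $(\Leftarrow)$ direction is fine. For $(\Rightarrow)$, the decisive step is the one you give last: since the period function is continuous and tends to the period $2\pi$ of the linear part as the orbit shrinks to the origin, isochrony forces the common period to be exactly $2\pi$, hence $\Phi_{2\pi}=\mathrm{id}$ near $0$ and $e^{2\pi L}=I$ for $L=DX(0)$; the Bochner average $h=\frac{1}{2\pi}\int_0^{2\pi}e^{-tL}\Phi_t\,dt$ is then analytic, tangent to the identity, and intertwines $\Phi_s$ with $e^{sL}$, which linearises $X$ analytically. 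Two remarks. First, this averaging argument is self-contained, so the entire Poincar\'e--Dulac discussion (resonant monomials, $A+B\equiv 0$ from the focus quantities, $A\equiv 0$ from the period constants) is logically redundant --- it is a correct and instructive heuristic for \emph{why} the normal form must be linear, but you never need the formal linearisation to run the convergence argument. Second, the claim that the analytic linearisation ``must coincide with the formal series constructed above'' is not quite right as stated: linearising transformations are only unique up to post-composition with maps commuting with $e^{tL}$, so the two need not coincide; fortunately nothing in your proof depends on this, since the existence of one analytic linearisation already gives the conclusion. You should also make explicit the small point that the continuity of the period function at the origin (via a return map to an analytic transversal) is what pins the common period to $2\pi$.
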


In this article, we use two classes of objects to study linearisation: {\bf prenormal forms} and the {\bf correction} introduced by J. Ecalle and B. Vallet in \cite{ev2}. The main characteristic of these objects is that they are {\bf computable algorithmically} and thus provide {\bf explicit criterion of linearisability} and moreover that they possess a {\bf rich algebraic structure} coming from the use of the {\bf mould formalism}. The linearisability is equivalent to the fact that any prenormal form of a given vector field is reduced to the linear part or its correction is trivial (see \cite{ev2}).\\

More precisely, let us consider a vector field in a {\it prepared form}:
\begin{equation}
X=X_{lin}+\underset{n\in A(X)}{\sum}B_{n},
\end{equation}
where $X_{lin}=\xi(x\partial_x-y\partial_{y})$, $x=\bar{y}$, the $B_n$ are homogeneous differential operators of degree $n=(n_1,n_2)\in \Z^2$, with $n_1 \geq 0$, $n_2\geq 0$ and all most one of the $n_i$ is equal to $-1$ i.e $B_n(x^m)=c_{n,m}x^{n+m}$, $c_{n,m} \in \C$, $m\in \N^2$, $x=(x_1,x_2)$ and $x^{m}=x_1^{m_1}x_2^{m_2}$. The set of degree associated to $X$ is denoted by $A(X)$. \\

Considering a polynomial vector field of the form (\ref{form}), the previous decomposition leads to three types of differential operators:
\begin{equation}
\left .
\begin{array}{lll}
B_{(i-1,k-i)} & = & x^{i-1}y^{k-i}(p_{i,k-i}x\partial_x+\bar{p}_{k-i+1,i-1} y\partial_y) ,\\
B_{(-1,k)} & = & p_{0,k}y^k\partial_x ,\\
B_{(k,-1)} & = & \bar{p}_{0,k}x^k\partial_{y} ,
\end{array}
\right .
\end{equation}
where $2\leq k \leq d$ and $1 \leq i \leq k$.\\

A {\bf prenormal} form associated to $X$ is an expression of the form :
\begin{align*}
X=X_{lin}+\underset{\textbf{n}\in A^*(X)}{\sum}M^{\textbf{n}}B_{\textbf{n}},
\end{align*}
where $\textbf{n}$ is a word obtained by concatenation of letters in $A(X)$, $A^*(X)$ is the set of words constructed on $A(X)$, and for $\textbf{n}=n_1\cdot n_2 \cdot ...\cdot n_r \in A^* (X)$, $B_{\textbf{n}}=B_{n_1}\circ B_{n_2}\circ \cdots \circ B_{n_r}$ denotes the usual composition of differential operators and $M^{\mathbf{n}} \in \C$. Following J. Ecalle \cite{ec1}, we denote by $M^{\bullet}$ and we call {\bf mould} the map from $A^* (X)$ to  $\C$ defined $M^{\bullet} [\mathbf{n} ] =M^{\mathbf{n}}$. The corresponding map for the differential operators $B_{\mathbf{n}}$ is called a {\bf comould}.

\begin{dfn}
Let $\omega : (A^* (X) , .) \rightarrow (\Z , +)$ be the morphism defined for any letter $n\in A(X)$ by $\omega (n)=\langle n, \lambda\rangle$, with $\lambda=(\xi,-\xi)$. Then we have for any word $\textbf{n}=n_1\cdot ... \cdot n_r$ that  $\omega (\mathbf{n} )=\underset{j=1}{\overset{r}{\sum}} \omega (n_j )$. For $\mathbf{n} \in A^* (X)$, the quantity $\omega (\mathbf{n} )$ is called the weight of the word $\mathbf{n}$. A word $\mathbf{n} \in A^*(X)$ is said to be resonant if $\omega (\mathbf{n} )=0$.
\end{dfn}

A fundamental role is played by resonant words as they are in one to one correspondence with monomials in the vector field $X$ which forming an obstruction to linearisability. Precisely, we have :
 
\begin{prop}
The mould $M^{\bullet}$ of a prenormal form satisfies $M^{\textbf{n}}=0$ for non-resonant word $\textbf{n}$.
\end{prop}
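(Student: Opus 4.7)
The plan is to exploit the weight--homogeneity of the comoulds $B_{\mathbf{n}}$ together with the characteristic property that a prenormal form in the sense of Ecalle--Vallet \cite{ev2} commutes with its linear part $X_{lin}$.

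First I would establish that every comould is an eigenoperator for the adjoint action of $X_{lin}$. Since $X_{lin}(x^m) = \langle m, \lambda \rangle\, x^m$ and $B_n(x^m) = c_{n,m}\, x^{n+m}$ for each letter $n \in A(X)$, one reads off directly $[X_{lin}, B_n] = \omega(n)\, B_n$. A straightforward induction on the length $r$ of a word $\mathbf{n} = n_1 \cdot \ldots \cdot n_r$, using the Leibniz rule $[X_{lin}, B \circ C] = [X_{lin}, B] \circ C + B \circ [X_{lin}, C]$ for the derivation $[X_{lin}, \cdot]$, then yields
$$[X_{lin}, B_{\mathbf{n}}] \; = \; \omega(\mathbf{n})\, B_{\mathbf{n}} .$$

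Next I would invoke the defining feature of a prenormal form $Y = X_{lin} + \sum_{\mathbf{n} \in A^*(X)} M^{\mathbf{n}} B_{\mathbf{n}}$, namely that $[X_{lin}, Y] = 0$, substitute the expansion, and use the previous eigenrelation to obtain
$$0 \; = \; [X_{lin}, Y] \; = \; \sum_{\mathbf{n} \in A^*(X)} \omega(\mathbf{n})\, M^{\mathbf{n}}\, B_{\mathbf{n}} .$$
Because the mould $M^{\bullet}$ is built in \cite{ev2} by a recursive mould equation whose step at length $r$ has the schematic form $\omega(\mathbf{n})\, M^{\mathbf{n}} = N^{\mathbf{n}}\bigl(M^{\bullet}|_{<r}, X\bigr)$, reading this equation off the identity above and inducting on the length of $\mathbf{n}$ shows that the quantity $\omega(\mathbf{n}) M^{\mathbf{n}}$ must vanish for each individual word. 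Consequently $M^{\mathbf{n}} = 0$ as soon as $\omega(\mathbf{n}) \neq 0$, i.e.\ as soon as $\mathbf{n}$ is non-resonant.

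The main obstacle is the passage from the operator identity to the word-by-word vanishing of the mould: the family $\{B_{\mathbf{n}}\}_{\mathbf{n} \in A^*(X)}$ is \emph{not} linearly independent in the enveloping algebra of differential operators, since it satisfies shuffle-type relations coming from composition of derivations. One therefore cannot naively extract coefficients from $\sum \omega(\mathbf{n}) M^{\mathbf{n}} B_{\mathbf{n}} = 0$. The cleanest way around this obstacle, and the reason the recursive construction of \cite{ev2} must be invoked rather than treated as a black box, is that the homological mould equation solves $M^{\mathbf{n}}$ uniquely from $\omega(\mathbf{n})$ and from already computed shorter-word values; when $\omega(\mathbf{n}) \neq 0$ the prenormalization prescription selects $M^{\mathbf{n}} = 0$, which is exactly the content of the proposition.
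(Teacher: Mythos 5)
The paper states this proposition without proof --- it is imported wholesale from the Ecalle--Vallet framework \cite{ev2}, where it is essentially built into the construction of a prenormal form --- so there is no argument of the paper's to measure yours against. Your opening computation is correct and is the right structural observation: $[X_{lin},B_n]=\omega(n)B_n$ follows from $B_n(x^m)=c_{n,m}x^{n+m}$ and $X_{lin}(x^m)=\langle m,\lambda\rangle x^m$, and the Leibniz induction gives $[X_{lin},B_{\mathbf{n}}]=\omega(\mathbf{n})B_{\mathbf{n}}$. You also correctly identify the decisive obstacle: the operators $B_{\mathbf{n}}$ are not linearly independent, so the operator identity $\sum_{\mathbf{n}}\omega(\mathbf{n})M^{\mathbf{n}}B_{\mathbf{n}}=0$ does not by itself yield $\omega(\mathbf{n})M^{\mathbf{n}}=0$ word by word.

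The difficulty is that your resolution of this obstacle is circular. The closing sentence --- ``when $\omega(\mathbf{n})\neq 0$ the prenormalization prescription selects $M^{\mathbf{n}}=0$, which is exactly the content of the proposition'' --- is not a derivation of the proposition; it is the proposition. Worse, the schematic recursion you write, $\omega(\mathbf{n})M^{\mathbf{n}}=N^{\mathbf{n}}(M^{\bullet}|_{<r},X)$, would if taken literally give $M^{\mathbf{n}}=N^{\mathbf{n}}/\omega(\mathbf{n})$ off resonance, which has no reason to vanish, so ``reading off'' word-by-word vanishing from it does not work. There are two honest ways to close the gap. First, one can note that in the mould formalism the commutation $[X_{lin},Pran(X)]=0$ is imposed at the level of the \emph{universal} mould, i.e.\ with the $B_{\mathbf{n}}$ treated as free noncommutative symbols indexed by words of $A^*(X)$; in that setting coefficient extraction is legitimate and one gets $\omega(\mathbf{n})M^{\mathbf{n}}=0$ for every word, hence the claim. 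Second, one can unwind the actual homological equation of \cite{ev2}: at length $r$ it has the schematic form $\omega(\mathbf{n})\,\theta^{\mathbf{n}}+Pran^{\mathbf{n}}=(\text{known from shorter words})$, where $\theta^{\bullet}$ is the mould of the normalizing automorphism, and the prenormalization choice is precisely to solve for $\theta^{\mathbf{n}}$ and set $Pran^{\mathbf{n}}=0$ whenever $\omega(\mathbf{n})\neq 0$, reserving a nonzero $Pran^{\mathbf{n}}$ for the resonant words where division by $\omega(\mathbf{n})$ is impossible. Either route is fine, but as written your argument asserts the conclusion rather than deriving it.
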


In the same way, the {\it correction} of $X$ is given by $Carr(X)=\underset{\bullet}{\sum}Carr^{\bullet}B_{\bullet}$ where the mould $Carr^{\bullet}$ is defined algorithmically (see \cite{ev2}) and satisfies $Carr^{\varnothing} =0$ and $Carr^{\mathbf{n}} =0$ for all non-resonant word $\textbf{n} \in A^* (X)$. \\

Using these two objects, the linearisation is equivalent to the fact that:
\begin{equation}
\underset{\bullet}{\sum}M^{\bullet}B_{\bullet} =0 ,
\end{equation}
where the mould $M^{\bullet}$ is either the mould $Carr^{\bullet}$ of the correction or a mould $Pran^{\bullet}$ of a given prenormal form. 
{\it Isochronicity} is then equivalent to find the conditions on the coefficients of a polynomial $P$ such that such a formal series vanishes. \\

The interest of the mould formalism with respect to others approaches is that it separates the part depending on the coefficients of $P$ (the comould contribution) from the universal one depending only on the alphabet $A(X)$ generated by the vector field (the mould part). Moreover, as already pointed out by J. Ecalle and D. Schlomiuk (see \cite{es},$\S$.10,p.1474), the main difficulty in order to solve the isochronicity problem is that the "ideal $\mathbb{I}$ generated by (finitely many) Taylor coefficients of $X$ is unwieldy, unstructured and lacking in truly canonical bases." They propose to replace the commutative ideal $\mathbb{I}$ by some Lie ideals which arise naturally in the mould formalism approach (See \cite{es},$\S$.10,p.1475-1475 for more details).\\

The aim of this paper is to {\bf interpret some classical isochronous center conditions in term of Lie ideals} following the general {\bf program proposed by J. Ecalle and D. Schlomiuk}. We are leaded to define a notion of {\bf geometric complexity} for an {\bf isochronous center condition}.\\

The plan of the paper is as follows: by studying first the isochronous center conditions for quadratic polynomials, we observe that the {\bf uniform} and {\bf holomorph isochronous centers} are related to the {\bf nilpotence} and {\bf triviality} of the Lie algebra generated by the family of differential operators $B_n$, $n\in A(X)$ and the {\bf resonant} set respectively. These conditions do not depend on the value of the underlying mould and in some way are the {\bf simplest} one. We then generalize these results for polynomial vector fields with an arbitrary large degree improving on an unpublished paper of B. Schuman \cite{schu}. We then discuss the notion of geometric complexity for an isochronous center condition and state a conjecture.

\section{Quadratic isochronous center}

We consider a real quadratic planar vector field $X$ in its complex representation on $\C^2$ defined by:
\begin{equation}
\left\lbrace
\begin{array}{lll}
\dot{x} & = & \xi x+p_{2,0}x^2+p_{1,1}xy+p_{0,2}y^2 ,\\
\dot{y} & = & -\xi y +\bar{p}_{0,2}x^2+\bar{p}_{1,1}xy+\bar{p}_{2,0}y^2,
\end{array}
\right.
\end{equation}
where $y=\bar{x}$, $p_{i,j}\in \C$, $i+j=2$, $i=0,1,2$. \\

The following results can be founded in \cite{fp,zola}:

\begin{theorem}
A real quadratic planar vector field $X$ is an isochronous center if and only if at least one of the following conditions is satisfied:
\begin{align*}
& i) \ p_{1,1}=p_{0,2}=0 \ \ (\mbox{\rm Holomorphic isochronous center}), \\
& ii) \ p_{2,0}=\bar{p}_{1,1}, \  p_{0,2}=0  \ \ (\mbox{\rm Uniform isochronous center}), \\
& iii) \ p_{2,0}=\frac{5}{2}\bar{p}_{1,1}, \  \vert p_{1,1}\vert^2=\frac{4}{9}\vert p_{0,2}\vert^2, \\
& iv) \ p_{2,0}=\frac{7}{6}\bar{p}_{1,1}, \vert p_{1,1}\vert^2=4\vert p_{0,2}\vert^2 . \\
\end{align*}
\end{theorem}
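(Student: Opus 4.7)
The plan is to exploit the linearisation criterion of the introduction: $X$ is an isochronous center if and only if its correction $Carr(X)=\sum_{\mathbf{n}}Carr^{\mathbf{n}}B_{\mathbf{n}}$ vanishes identically. Since $Carr^{\mathbf{n}}=0$ on non-resonant words and each $B_{\mathbf{n}}$ depends polynomially on $p_{2,0},p_{1,1},p_{0,2}$ and their conjugates, the theorem reduces to the claim that an a priori infinite polynomial system in these six real parameters has exactly four irreducible components, namely (i)--(iv). I would proceed in three stages: enumerate the resonant words, compute $Carr^{\mathbf{n}}$ on the short ones to extract candidate conditions, and then prove sufficiency.

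For the first stage, the quadratic alphabet has four letters $a=(0,1)$, $b=(1,0)$, $c=(-1,2)$, $d=(2,-1)$ of weights $-\xi,+\xi,-3\xi,+3\xi$. An easy arithmetic count gives that no word of odd length is resonant, that the length-$2$ resonant words are exactly $ab,ba,cd,dc$, and that the length-$4$ resonant words are the shuffles of $aabb$, $aaad$, $bbbc$, $ccdd$ and $abcd$. For the second stage I would compute $Carr^{\mathbf{n}}$ on these words via the Ecalle--Vallet recursion of \cite{ev2}. Two of the four branches already appear for purely structural reasons, independently of the numerical mould values: in case (i), $p_{1,1}=p_{0,2}=0$ makes $B_c,B_d$ vanish and reduces $B_a,B_b$ to the commuting pair $\bar{p}_{2,0}y^2\partial_y$ and $p_{2,0}x^2\partial_x$, so every $B_{\mathbf{n}}$ over a resonant word factors trivially; in case (ii), $p_{2,0}=\bar{p}_{1,1}$ and $p_{0,2}=0$ make $B_a,B_b$ proportional to $p_{1,1}yE$ and $\bar{p}_{1,1}xE$ with $E=x\partial_x+y\partial_y$, while $B_c=B_d=0$, so the Lie algebra generated by the letters is nilpotent. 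The remaining families (iii) and (iv) emerge as the only other real solutions of the polynomial system read off from the length-$4$ resonant words, after eliminating the structural branches.

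The main obstacle is sufficiency: proving that the conditions extracted from a finite truncation of the correction in fact force $Carr^{\mathbf{n}}=0$ on every resonant word. This is the isochronous-center analogue of Bautin's finiteness. I would handle it through the structural/analytic dichotomy the paper advertises. In cases (i) and (ii) the argument is purely algebraic: the Lie algebra generated by the non-zero $B_n$'s is too small (commutative, respectively nilpotent) to produce any non-trivial resonant differential operator, so $Carr(X)=0$ without any reference to the precise mould values. In cases (iii) and (iv) the argument is analytic: following \cite{fp,zola}, one exhibits an explicit analytic change of coordinates linearising $X$, which by Theorem 1 is equivalent to the vanishing of $Carr(X)$. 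Conversely, completeness of the list follows from the polynomial elimination of the second stage, showing that any simultaneous solution of the length-$2$ and length-$4$ equations must lie on one of the four branches.
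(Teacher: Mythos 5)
The first thing to note is that the paper does not actually prove this theorem: it is quoted from \cite{fp,zola} (and goes back to Loud \cite{loud}), and the paper only \emph{re-derives} the sufficiency of conditions (i) and (ii) through the nilpotency of the Lie algebra $\mathfrak{b}$ (Lemmas \ref{fond2} and \ref{fond3}). Your treatment of those two cases coincides with the paper's argument and is correct: with $p_{1,1}=p_{0,2}=0$ the operators $B_{(0,1)},B_{(1,0)}$ become the commuting pair $\bar p_{2,0}y^2\partial_y$, $p_{2,0}x^2\partial_x$, and with $p_{2,0}=\bar p_{1,1}$, $p_{0,2}=0$ they become multiples of $yX_E$ and $xX_E$; in either case $B_{(-1,2)}=B_{(2,-1)}=0$, every bracket $[B_{\mathbf{n}}]$ with $\omega(\mathbf{n})=0$ vanishes, and the projection formula kills the correction. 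Your enumeration of the resonant words (no odd lengths, the four length-$2$ words, the five multisets at length $4$) is also right.

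The gap is in everything else, which is precisely the hard content of the theorem. First, you never carry out the computation of $Carr^{\mathbf{n}}$ on the length-$4$ words, so conditions (iii) and (iv) are not derived: the constants $\tfrac{5}{2}$, $\tfrac{4}{9}$, $\tfrac{7}{6}$, $4$ \emph{are} the theorem, and cannot be left to an unexecuted ``polynomial elimination''. Second, your completeness claim assumes that vanishing of the correction on words of length $\le 4$ already forces one of (i)--(iv); this is a Bautin-type finiteness statement that you assert without proof, and it is doubtful as stated, since the classical derivation via period constants needs obstructions corresponding to words of length up to $6$ to isolate the four strata. Third, for the sufficiency of (iii) and (iv) you appeal to the explicit linearisations of \cite{fp,zola}, i.e.\ to the very sources the theorem is taken from, so that step is a citation rather than a proof. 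As a programme your outline is reasonable and close in spirit to what the paper advertises, but as a proof the derivation of (iii)--(iv) and the completeness of the list are both missing.
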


Our approach suggests to look for these conditions by studying first their consequences on the Lie algebra generated by the $B_n$, $n\in A(X)$. 

\begin{lem}
\label{fond2}
A quadratic vector field satisfying
\begin{equation}
p_{2,0}=\bar{p}_{1,1}, \  p_{0,2}=0 ,
\end{equation}
or
\begin{equation}
p_{1,1}=p_{0,2}=0 ,
\end{equation}
is such that the Lie algebra $\mathfrak{b}$ generated by $\mathbf{B}(X) =\{ B_n \} _{n\in A(X)}$ is nilpotent of order 1, i.e. for all $n,\, m \in A (X)$ we have 
\begin{equation}
[B_n ,B_m ] =0 .
\end{equation}
\end{lem}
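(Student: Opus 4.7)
The plan is to specialize the general formulas for the $B_n$ to the quadratic case ($k=2$, $i\in\{1,2\}$) and to simplify the surviving operators under each of the two hypotheses, so that the vanishing of the commutators of generators becomes transparent. The full alphabet is contained in $\{(0,1),(1,0),(-1,2),(2,-1)\}$, and one reads off from the general formulas of the introduction
$$B_{(1,0)} = x\bigl(p_{2,0}\,x\partial_x + \bar p_{1,1}\,y\partial_y\bigr),\quad B_{(0,1)} = y\bigl(p_{1,1}\,x\partial_x + \bar p_{2,0}\,y\partial_y\bigr),$$
$$B_{(-1,2)} = p_{0,2}\,y^2\partial_x, \qquad B_{(2,-1)} = \bar p_{0,2}\,x^2\partial_y.$$
In both hypotheses the condition $p_{0,2}=0$ eliminates $B_{(-1,2)}$ and $B_{(2,-1)}$, so $\mathbf{B}(X)$ reduces to at most $\{B_{(1,0)},B_{(0,1)}\}$; it therefore suffices to verify that these two operators commute.

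For the \emph{holomorphic} case $p_{1,1}=p_{0,2}=0$, the surviving operators collapse to $B_{(1,0)}=p_{2,0}\,x^2\partial_x$ and $B_{(0,1)}=\bar p_{2,0}\,y^2\partial_y$, which are derivations in disjoint variables; their bracket vanishes by a direct application of the Leibniz rule.

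For the \emph{uniform} case $p_{2,0}=\bar p_{1,1}$, $p_{0,2}=0$, taking complex conjugates gives $\bar p_{2,0}=p_{1,1}$, so the two generators factor through the Euler field $E=x\partial_x+y\partial_y$ as
$$B_{(1,0)}=\bar p_{1,1}\,xE,\qquad B_{(0,1)}=p_{1,1}\,yE.$$
Using the standard identity $[fA,gB]=fA(g)B-gB(f)A+fg[A,B]$ for derivations $A,B$ and functions $f,g$, the core computation reduces to $[xE,yE]=\bigl(xE(y)-yE(x)\bigr)E+xy[E,E]=(xy-yx)E=0$.

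I do not anticipate any serious obstacle. The one substantive observation is that the uniform-center relation $p_{2,0}=\bar p_{1,1}$ is precisely what forces both surviving generators to be proportional to a monomial multiple of the Euler field, at which point $[xE,yE]=0$ closes the argument; everything else is a mechanical bracket computation. Once both cases are settled, $\mathfrak{b}$ is generated by pairwise commuting elements, hence abelian, and therefore nilpotent of order~$1$.
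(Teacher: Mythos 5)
Your proposal is correct and follows essentially the same route as the paper: both arguments use $p_{0,2}=0$ to kill $B_{(2,-1)}$ and $B_{(-1,2)}$ and then reduce everything to the single commutator $[B_{(1,0)},B_{(0,1)}]$, which the paper evaluates in coordinates and observes vanishes exactly when $p_{1,1}(\bar p_{1,1}-p_{2,0})=0$. Your factorization of the uniform case through the Euler field $x\partial_x+y\partial_y$ is a slightly more structural way to see the same cancellation, and it is precisely the mechanism the paper exploits later in Lemma~\ref{structure1} for arbitrary degree.
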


Using this Lemma and the structure of the series, we recover easily that these conditions correspond to an isochronous center. Indeed, we have the following general observation :

\begin{lem}
\label{fond3}
If the Lie algebra $\mathfrak{b}$ is nilpotent of order $1$ then any prenormal form associated to a mould $Pran^{\bullet}$ is reduced to 
\begin{equation}
Pran (X) = X_{lin} + \di\sum_{n\in A(X),\ \omega (n)=0} Pran^n B_n ,
\end{equation}
and the correction is given by
\begin{equation}
Carr (X) =\di\sum_{n\in A(X),\ \omega (n)=0} Carr^n B_n .
\end{equation}
\end{lem}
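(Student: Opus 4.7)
The plan is to combine two ingredients from the mould formalism: the \emph{alternality} of the moulds $Pran^{\bullet}$ and $Carr^{\bullet}$, and the classical identity that an alternal mould contracts against a commutative comould to produce only length-one contributions.

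First I would recall that both $Pran(X)$ and $Carr(X)$ are, by construction, formal vector fields, i.e.\ derivations of $\C[[x,y]]$. In the mould formalism this corresponds to $Pran^{\bullet}$ and $Carr^{\bullet}$ being alternal, meaning they satisfy the shuffle relations
\begin{align*}
\sum_{\mathbf{n}\in \mathrm{sh}(\mathbf{a},\mathbf{b})} M^{\mathbf{n}} = 0
\end{align*}
for all non-empty $\mathbf{a},\mathbf{b}\in A^{*}(X)$. A classical consequence (see \cite{ec1,ev2}) is that for any such alternal $M^{\bullet}$, the contraction $\sum_{\mathbf{n}} M^{\mathbf{n}} B_{\mathbf{n}}$ lies in the completion of the Lie algebra $\mathfrak{b}$ generated by $\mathbf{B}(X)$ inside the associative algebra of differential operators.

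Second, I would exploit the hypothesis $[B_n,B_m]=0$ for all $n,m\in A(X)$. Under this assumption $\mathfrak{b}$ reduces to the linear span of $\mathbf{B}(X)$, so every element of its completion is of the form $\sum_{n\in A(X)} \lambda_n B_n$. Combined with the first step this forces
\begin{align*}
\sum_{\mathbf{n}\in A^{*}(X)} M^{\mathbf{n}} B_{\mathbf{n}} = \sum_{n\in A(X)} M^{n} B_{n}
\end{align*}
for $M^{\bullet}\in\{Pran^{\bullet},Carr^{\bullet}\}$. Applying the vanishing of $M^{\mathbf{n}}$ on non-resonant words (and $Carr^{\varnothing}=0$) then restricts the right-hand side to the resonant letters, which yields the two formulas of the statement.

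The main technical obstacle is really the first step, namely justifying that an alternal mould contracts into the Lie algebra $\mathfrak{b}$. The cleanest route is to invoke an Eulerian-idempotent / Dynkin-type projector, which on a word of length $r$ sends the associative product $B_{\mathbf{n}}$ to $\tfrac{1}{r}$ times a right-normed iterated bracket; alternality of $M^{\bullet}$ is then precisely the condition that this projector acts as the identity on the whole contraction. Once that general principle is granted, the rest of the argument is formal bookkeeping and, importantly, requires \emph{no} information about the values of the moulds beyond the vanishing on non-resonant words — which is why the same proof covers $Pran^{\bullet}$ and $Carr^{\bullet}$ simultaneously.
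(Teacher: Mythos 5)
Your proposal is correct and follows essentially the same route as the paper: the ``Eulerian-idempotent / Dynkin-type projector'' you invoke is exactly the classical projection theorem (Serre, Theorem 8.1) that the paper uses, with alternality of $Pran^{\bullet}$ and $Carr^{\bullet}$ playing the role of the paper's observation that the contracted series is a vector field (hence primitive), after which nilpotency of order $1$ kills all brackets of length $r\geq 2$ and the vanishing of the mould on non-resonant words restricts the surviving length-one terms to resonant letters. The only cosmetic difference is that you phrase the intermediate step as membership in the completion of $\mathfrak{b}$ before identifying the coefficients, whereas the paper writes the projector formula directly.
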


\begin{proof}
By the classical projection Theorem (see \cite{jps} Theorem 8.1 p.28), we have 
\begin{equation}
\di\sum_{\bullet} M^{\bullet} B_{\bullet} =\di\sum_{r\geq 1} \di\frac{1}{r} \di\sum_{\bn \in A^* (X)} 
M^{\bn} [B_{\bn} ] ,
\end{equation}
where $[B_n ] =B_n$ for $n\in A(X)$ and $[B_{\bn} ]=[B_{n_r} ,[B_{n_{r-1}} ,\dots ,[B_{n_2} ,B_{n_1} ] \dots ]]$ for $\bn =n_1 \dots n_r \in A^* (X)$, $r\geq 2$. Indeed the constant term of the series must be zero as this formal series corresponds to a vector field.\\

As a consequence, under these conditions we obtain
\begin{equation}
\label{projprenor}
\di\sum_{\bullet} M^{\bullet} B_{\bullet} =\di\sum_{r\geq 1} \di\frac{1}{r} \di\sum_{n \in A (X)} 
M^{n} B_n ,
\end{equation}
The mould $M^{\bullet}$ is equal to zero on non-resonant words so that the right side is reduce to a sum over resonant letters. 
\end{proof}

As already noted, the value and nature of the moulds are not important. Only the Lie algebraic structure of $\mathfrak{b}$ is taken into account.\\

The proof that conditions i) and ii) lead to isochronous center is then easily deduces. Indeed, quadratic vector fields do not produce resonant letters. As a consequence, Lemma \ref{fond3} implies that $Pran(X)=X_{lin}$ and $Carr(X)=0$.\\

The proof of Lemma \ref{fond2} goes as follows : First, as $p_{0,2}=0$ in the two cases, we have $B_{(2,-1)}=B_{(-1,2)}=0$. A simple computation gives 
\begin{equation}
[B_{(1,0)} ,B_{(0,1)} ]= x^2 y \left [ p_{1,1} \left ( \bar{p}_{1,1} -p_{2,0} \right ) \partial_x +
\bar{p}_{1,1} \left ( - p_{1,1} +\bar{p}_{2,0} \right ) \partial_y \right ] .
\end{equation}
As a consequence, a condition for $\mathfrak{b}$ to be nilpotent of order $1$ is 
\begin{equation}
p_{1,1} \left ( \bar{p}_{1,1} -p_{2,0} \right ) =0 .
\end{equation}

Then if $p_{1,1} =0$ or $\bar{p}_{1,1} =p_{2,0}$ the Lie algebra generated by the vector fields in $\mathbf{B}(X)$ is nilpotent of order one.\\

We generalize these conditions for a homogeneous polynomial perturbation of degree $d\geq 2$ in the following.

\section{Lie algebras generated by polynomial vector fields}

The previous Section indicates that the Lie algebra $\mathfrak{b}$ generated by the set $\mathbf{B}(X)$ of comoulds associated to $X$ plays a central role in the understanding of some center conditions. In this Section, we derive some useful results which will be used in our study of uniform and holomorph center conditions.

\subsection{Descending central series and Nilpotent Lie algebra}
In this section, we give some reminders about Lie algebras. We refer to \cite{reut} and \cite{jps} for more details.

\begin{dfn}
Let $\mathfrak{g}$ be a Lie algebra, we define its descending central series by :
\begin{align*}
\mathcal{C}^1(\mathfrak{g})=\mathfrak{g}, \\
\mathcal{C}^2(\mathfrak{g})=[\mathfrak{g},\mathfrak{g}], \\
\mathcal{C}^{n+1}(\mathfrak{g})=[\mathfrak{g},\mathcal{C}^n(\mathfrak{g})] \ \text{for} \ i\geq 2.
\end{align*}
\end{dfn}

Due to Lemma \ref{fond3}, we are interested in Nilpotent Lie algebra which are defined as follows :

\begin{dfn}
A Lie algebra $\mathfrak{g}$ is nilpotent if there exists an integer $i$ such that $\mathcal{C}^i(\mathfrak{g})$.
\end{dfn}

\subsection{Preliminaries}
We begin with some computations on Lie brackets:

\begin{lem}
Let $X$ be a polynomial vector field of degree $d \geq 2$ and $\mathbf{B}(X)$ its associated set of homogeneous differential operators. We have for $n=2,\dots ,d$ and $i=1,\dots ,n$:
\begin{equation}
\left . 
\begin{array}{lll}
[B_{(n,-1)} , B_{(-1,n)}] & = & n\vert p_{0,n}\vert ^2 (xy)^{n-1}
\left ( x\partial_x -y\partial_y \right ) ,\\

[ B_{(i-1,n-i)} , B_{(n-i,i-1)} ] & = & \left [ (n-i) p_{i,n-i} U_i -(i-1) p_{n-i+1,i-1} \bar{U}_i \right ] (xy)^{n-1}\partial_x -\\

 & & \left [ (n-i) \bar{p}_{i,n-i} \overline{U}_i +(i-1) \bar{p}_{n-i+1 ,i-1} U_i \right ] (xy)^{n-1}\partial_y ,
\end{array}
\right .
\end{equation}
where $U_i = p_{n-i+1,i-1} -\bar{p}_{i,n-i}$.
\end{lem}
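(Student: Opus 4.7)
The strategy is to compute both brackets directly using the elementary commutator formula for planar vector fields:
\[
[f\partial_x + g\partial_y,\, h\partial_x + k\partial_y] = (f h_x + g h_y - h f_x - k f_y)\partial_x + (f k_x + g k_y - h g_x - k g_y)\partial_y.
\]
Since each $B_n$ is a first-order differential operator with monomial coefficients, this reduces the proof to careful bookkeeping with the product rule.

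For the first bracket, $B_{(n,-1)} = \bar p_{0,n} x^n \partial_y$ has a vanishing $\partial_x$-component and $B_{(-1,n)} = p_{0,n} y^n \partial_x$ has a vanishing $\partial_y$-component, so of the eight terms in the formula six drop out identically. The two surviving contributions come from $\bar p_{0,n} x^n \cdot \partial_y(p_{0,n} y^n)$ and $p_{0,n} y^n \cdot \partial_x(\bar p_{0,n} x^n)$, and assemble directly into $n|p_{0,n}|^2 (xy)^{n-1}(x\partial_x - y\partial_y)$.

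For the second bracket, I would substitute
\begin{align*}
B_{(i-1,n-i)} &= p_{i,n-i}\,x^i y^{n-i}\partial_x + \bar p_{n-i+1,i-1}\,x^{i-1} y^{n-i+1}\partial_y,\\
B_{(n-i,i-1)} &= p_{n-i+1,i-1}\,x^{n-i+1} y^{i-1}\partial_x + \bar p_{i,n-i}\,x^{n-i} y^i\partial_y,
\end{align*}
into the formula. A degree count shows that every monomial contribution to the $\partial_x$-coefficient has bidegree $(n,n-1)$, and every contribution to the $\partial_y$-coefficient has bidegree $(n-1,n)$, so in each case a common monomial factor pulls out. What remains in the $\partial_x$-component is a scalar combination of three distinct products of coefficients, with the integer weights $i,\, n-i,\, i-1,\, n-i+1$ produced by the product rule.

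The only non-mechanical step, and the main obstacle, is to regroup this raw scalar into the $U_i,\, \bar U_i$ form. Expanding the target combination $(n-i)\,p_{i,n-i} U_i \pm (i-1)\,p_{n-i+1,i-1} \bar U_i$ via $U_i = p_{n-i+1,i-1} - \bar p_{i,n-i}$ and $\bar U_i = \bar p_{n-i+1,i-1} - p_{i,n-i}$ produces the two square contributions $-(n-i)|p_{i,n-i}|^2$ and $\pm(i-1)|p_{n-i+1,i-1}|^2$ together with a single mixed term $p_{i,n-i}p_{n-i+1,i-1}$ whose weight matches what the product rule produced via the elementary identity $(n-i) \mp (i-1) = n-2i+1$. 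The $\partial_y$-coefficient is obtained either by the same calculation or, more economically, by invoking the involution $(x,y,p)\leftrightarrow(y,x,\bar p)$, which exchanges the two operators and swaps $U_i$ with $\bar U_i$.
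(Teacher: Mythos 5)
Your overall strategy --- the elementary commutator formula plus bookkeeping with the product rule --- is the right (and essentially the only) one; the paper states this lemma without proof, so there is nothing to compare methods against, and your treatment of the first bracket is complete and correct. The substitutions you give for $B_{(i-1,n-i)}$ and $B_{(n-i,i-1)}$ are also the correct ones.

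The problem is the final ``regrouping'' step, which is exactly where you hedge and where the verification actually fails against the statement as printed. Carrying out your own computation honestly, the $\partial_x$-coefficient of the second bracket is
$\bigl[(n-2i+1)\,p_{i,n-i}p_{n-i+1,i-1}-(n-i)\vert p_{i,n-i}\vert^2+(i-1)\vert p_{n-i+1,i-1}\vert^2\bigr]\,x^ny^{n-1}$,
and this equals $\bigl[(n-i)p_{i,n-i}U_i\mathbf{+}(i-1)p_{n-i+1,i-1}\bar U_i\bigr]x(xy)^{n-1}$ --- with a plus sign inside the bracket, whereas the lemma prints a minus (expanding the printed expression gives mixed coefficient $n-1$ and the opposite sign on $(i-1)\vert p_{n-i+1,i-1}\vert^2$). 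Your device of writing $\pm$ and then invoking ``$(n-i)\mp(i-1)=n-2i+1$'' selects the correct branch without admitting that it contradicts the statement; an identity that holds for only one choice of sign cannot be left ambiguous. Likewise, your degree count is correct --- the $\partial_x$-coefficient has bidegree $(n,n-1)$ --- but you do not confront the fact that the printed monomial $(xy)^{n-1}$ has bidegree $(n-1,n-1)$; the statement should read $x(xy)^{n-1}\partial_x$ and $y(xy)^{n-1}\partial_y$ (compare the first bracket, where the paper does write $(xy)^{n-1}(x\partial_x-y\partial_y)$). So what you have actually proved is a corrected version of the lemma, and a complete write-up must say so explicitly rather than assert agreement. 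Note that neither correction affects the paper's subsequent use of the lemma: the condition $U_i=0$ (together with $p_{0,n}=0$) still annihilates all these brackets, and the $\partial_y$-line as printed is consistent with the corrected $\partial_x$-line under the involution $(x,y,p)\leftrightarrow(y,x,\bar p)$ that you rightly invoke.
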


The previous Lemma gives a special role to the quantities $U_i$, $i=1,\dots ,n$ and also to the coefficients $p_{i,n-i}$. Using this remark, we are able to derive a class of explicit conditions for which the associated Lie algebra $\mathfrak{b}$ is nilpotent of order $1$.

\subsection{Nilpotent Lie algebras - Uniform conditions}

A trivial condition in order to ensure that the Lie Bracket $[ B_{(i-1,n-i)} , B_{(n-i,i-1)} ]$ is zero is to set $U_i =0$ for $i=1,\dots ,n$. This condition coupled with $p_{0,n}=0$ annihilating the bracket $[B_{(n,-1)} , B_{(-1,n)}]$ is in fact more powerful. Indeed, we have:

\begin{lem}
\label{structure1}
Let $X$ be a polynomial vector field of the form (\ref{form}) where $P$ is homogeneous of degree $d$. If the coefficients of $P$ satisfy 
\begin{equation}
p_{d-i+1,i-1} -\bar{p}_{i,d-i} =0 ,\ \ i=1, \dots ,d ,\ \mbox{\rm and}\ \ p_{0,d} =0,
\end{equation}
then the Lie algebra $\mathfrak{b}$ is nilpotent of order $1$.
\end{lem}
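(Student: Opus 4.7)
The plan is to show that under the stated hypotheses every Lie bracket $[B_n, B_m]$ with $n, m \in A(X)$ vanishes, so that $\mathfrak{b}$ is abelian and therefore nilpotent of order $1$.

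Since $P$ is homogeneous of degree $d$, the alphabet $A(X)$ reduces to the diagonal letters $(i-1, d-i)$ for $i = 1, \ldots, d$ together with the two off-diagonal letters $(-1, d)$ and $(d, -1)$. The assumption $p_{0,d} = 0$ immediately kills the latter two operators, since $B_{(-1,d)} = p_{0,d}\, y^d \partial_x = 0$ and $B_{(d,-1)} = \bar{p}_{0,d}\, x^d \partial_y = 0$. Hence only brackets among the diagonal $B_{(i-1,d-i)}$ remain to be analyzed.

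The key step is to exploit the conjugate relation $\bar{p}_{d-i+1, i-1} = p_{i,d-i}$ to put each diagonal operator into a canonical form. This relation aligns the two components of $B_{(i-1, d-i)}$ into a scalar multiple of the Euler operator $E = x\partial_x + y\partial_y$:
\begin{equation}
B_{(i-1, d-i)} = x^{i-1} y^{d-i} \left( p_{i,d-i}\, x\partial_x + \bar{p}_{d-i+1,i-1}\, y\partial_y \right) = p_{i, d-i} \, f_i \, E,
\end{equation}
where $f_i = x^{i-1} y^{d-i}$ is homogeneous of the common degree $d-1$. A short direct computation then yields the identity
\begin{equation}
[fE, gE] = \bigl( f\, E(g) - g\, E(f) \bigr)\, E,
\end{equation}
and when $f, g$ are both homogeneous of the same degree $k$ this coefficient vanishes identically, since $E(f) = kf$ and $E(g) = kg$. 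Applied to the pairs $(f_i, f_j)$, this gives $[B_{(i-1,d-i)}, B_{(j-1,d-j)}] = 0$ for all $i, j \in \{1, \ldots, d\}$; together with the vanishing of $B_{(-1,d)}$ and $B_{(d,-1)}$ this exhausts every pair in $A(X)$.

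I do not anticipate any genuine obstacle here: the entire argument rests on the single observation that the uniform condition $p_{d-i+1, i-1} = \bar{p}_{i,d-i}$ is precisely what forces each diagonal $B_{(i-1, d-i)}$ to be a multiple of the Euler operator, after which commutativity follows from the elementary fact that multiplication by a homogeneous polynomial and $E$ commute up to a scalar that cancels in the bracket.
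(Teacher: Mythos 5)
Your proof is correct and follows essentially the same route as the paper's: the condition $p_{0,d}=0$ kills the operators $B_{(d,-1)}$ and $B_{(-1,d)}$, the conjugate relation reduces each remaining $B_{(i-1,d-i)}$ to a multiple $p_{i,d-i}\,x^{i-1}y^{d-i}X_E$ of the Euler field, and homogeneity of the common degree $d-1$ forces all brackets to vanish. Your use of the identity $[fE,gE]=(fE(g)-gE(f))E$ is just a cleaner packaging of the explicit bracket computation the paper carries out.
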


\begin{proof}
The condition $p_{0,d}=0$ implies not only that the bracket $[B_{(n,-1)} , B_{(-1,n)}]$ is zero, but also that $B_{(n,-1)} =B_{(-1,n)} =0$. Then, the Lie algebra $\mathfrak{b}$ is only generated by the operators $B_{(i-1,d-i)}$ for $=1,\dots ,d$. Again the condition $p_{d-i+1,i-1} -\bar{p}_{i,d-i} =0$, $i=1, \dots ,d$ implies that all the operators $B_{(i-1,d-i)}$ are of the form  $B_{(i-1,d-i)} = p_{i,d-i} x^{i-1} y^{d-i} X_E$, where $X_E$ is the classical {\it Euler vector field} defined by 
$X_E = x\partial_x +y\partial_y$. The Euler vector field acts trivially on each monomials. Precisely, we have $X_E (x^{i-1} y^{d-i} )=(d-1)\, x^{i-1} y^{d-i}$. The Lie bracket of $B_{(i-1,d-i)}$ with $B_{(j-1,d-j)}$ is then easily computed. We obtain:
\begin{equation}
\left .
\begin{array}{lll}
[B_{(i-1,d-i)} ,B_{(j-1,d-j)} ] & = & p_{i,d-i} p_{j,d-j} \left [ 
x^{i-1}y^{d-i} \left ( X_E [x^{j}y^{d-j}] \partial_x
+X_E [x^{j-1}y^{d-j+1}] \partial_y  \right ) \right . \\
 & & - \left . x^{j-1}y^{d-j} \left ( X_E [ x^{i}y^{d-i}]\partial_x +
+X_E [x^{i-1}y^{d-i+1}] \partial_y  \right ) 
\right ] ,\\
 & = & 0 ,
\end{array}
\right .
\end{equation}
which implies that $\mathcal{C}^2 (\mathfrak{b} )=0$. This concludes the proof.
\end{proof}

\subsection{Resonant subset - Holomorphic conditions}

We have pointed out the special role of the coefficients $p_{i,n-i}$ in the computations. We introduce the resonant subset $\mathfrak{b}_{res}$ of $\mathfrak{b}$ generated by all the Lie bracket of $[B_{\mathbf{n}}]$ such that $\omega (\mathbf{n} )=0$, $\mathbf{n} \in A(X)^*$. As in the previous Section, we prove the following result:

\begin{lem}
\label{holom}
Let $X$ be a polynomial vector fields of the form \ref{form}. We assume that the coefficients of $P$ satisfies 
\begin{equation}
p_{i,n-i}=0,\  n=2,\dots ,d,\  i=0,\dots ,d-1 .
\end{equation}
then the subset $\mathfrak{b}_{res}$ of the Lie algebra $\mathfrak{b}$ is trivial, i.e. $\mathfrak{b}_{res} =\{ 0\}$.
\end{lem}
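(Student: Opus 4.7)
My plan is to exploit how drastically the hypothesis collapses the set $\mathbf{B}(X)$. First I would substitute the assumed vanishings into the three explicit formulas for $B_{(i-1,k-i)}$, $B_{(-1,k)}$ and $B_{(k,-1)}$ recorded near the beginning of the excerpt. Both $B_{(-1,k)}$ and $B_{(k,-1)}$ carry a factor $p_{0,k}$ and hence vanish identically. Inside $B_{(i-1,k-i)}=x^{i-1}y^{k-i}(p_{i,k-i}\,x\partial_x+\bar{p}_{k-i+1,i-1}\,y\partial_y)$, the coefficient $p_{i,k-i}$ survives only when $k-i=0$, and $\bar{p}_{k-i+1,i-1}$ only when $i-1=0$; since $k\ge 2$ these two cases are mutually exclusive, so the only non-zero generators that remain are
\[
B_{(k-1,0)}=p_{k,0}\,x^k\partial_x\qquad\text{and}\qquad B_{(0,k-1)}=\bar{p}_{k,0}\,y^k\partial_y,\qquad k=2,\dots,d.
\]

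Next I would observe that the survivors split into two subfamilies $\mathbf{B}_x=\{B_{(k-1,0)}\}_k$ and $\mathbf{B}_y=\{B_{(0,k-1)}\}_k$ acting on disjoint coordinates, and that $x^a\partial_x$ trivially commutes with $y^b\partial_y$. Hence $[\mathbf{B}_x,\mathbf{B}_y]=0$, and the Lie algebra $\mathfrak{b}$ decomposes as a direct sum of commuting ideals $\mathfrak{b}=\mathfrak{b}_x\oplus\mathfrak{b}_y$, where every element of $\mathfrak{b}_x$ has shape $c\,x^K\partial_x$ (that is, comes from a single letter $(K-1,0)$) and every element of $\mathfrak{b}_y$ has shape $c\,y^K\partial_y$ (letter $(0,K-1)$). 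A straightforward induction on the length of $\mathbf{n}$, using bilinearity of the bracket together with $[\mathfrak{b}_x,\mathfrak{b}_y]=0$, then shows that $[B_{\mathbf{n}}]=0$ whenever $\mathbf{n}$ mixes letters of the two types.

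It only remains to inspect purely-$x$ and purely-$y$ words. If every letter of $\mathbf{n}=n_1\cdots n_r$ is of the form $(k_i-1,0)$ with $k_i\ge 2$, then $\omega(\mathbf{n})=\xi\sum_{i=1}^{r}(k_i-1)$, which is non-zero since $\xi\neq 0$ and each $k_i-1\ge 1$; so $\mathbf{n}$ is not resonant. The all-$(0,k_i-1)$ case is identical with the opposite sign. Consequently $[B_{\mathbf{n}}]=0$ for every resonant word $\mathbf{n}$, which is precisely $\mathfrak{b}_{res}=\{0\}$.

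I do not foresee a real obstacle: the proof rests on the two elementary observations that the hypothesis kills every $B_n$ except the two commuting ``diagonal'' chains, and that the weights on each chain are strictly of one sign. The only small care needed is the induction checking that iterated brackets mixing $\mathfrak{b}_x$ and $\mathfrak{b}_y$ vanish, and this is immediate from the direct-sum decomposition.
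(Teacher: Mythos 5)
Your proof is correct and follows essentially the same route as the paper's: both reduce the generating set under the hypothesis to the two ``diagonal'' families $B_{(k-1,0)}=p_{k,0}x^k\partial_x$ and $B_{(0,k-1)}=\bar{p}_{k,0}y^k\partial_y$, observe that these two families commute so that any iterated bracket of a mixed word vanishes, and note that a word built from only one family cannot be resonant since all its letters have weights of the same sign. Your explicit induction on word length via the decomposition $\mathfrak{b}=\mathfrak{b}_x\oplus\mathfrak{b}_y$ is a slightly tidier way of handling arbitrary interleavings than the paper's informal ``begins with'' argument, but it is the same idea.
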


\begin{proof}
As usual, the previous condition implies that $p_{0,n}=0$ for all $n=2,\dots ,d$ so that $B_{(n,-1)}=B_{(-1,n)}=0$. We then concentrate on the operators $B_{(i-1,n-i)}$ for $i=1,\dots , n$ and $n=2,\dots ,d$. The assumption implies that $B_{(i-1,n-i)}=0$ for $i=2,\dots ,n-1$ and 
\begin{equation}
B_{(0,n-1)} = y^n \bar{p}_{n,0} \partial_y ,\ \ \mbox{\rm and}\ \ B_{(n-1,0)} = x^n p_{n,0} \partial_x .
\end{equation}
for $n=2,\dots ,d$. A simple computation shows that for all $n,m \in {2,\dots ,d}$, we have
\begin{equation}
[ B_{(0,n-1)} ,B_{(m-1 ,0)}] =0 .
\end{equation}
Moreover, we have $[ B_{(0,n-1)} ,B_{(0,m-1)}]= y^{n+m-1} c_{n,m} \partial_y$, where $c_{n,m}$ is a constant. As a consequence, we have
$[B_{(0,n_1 -1),\dots ,(0,n_k -1)} ]=y^{n_1 +\dots +n_k -1} c_{n_1 ,\dots ,n_k} \partial_y$. The same is true for $[ B_{(m_1 -1 ,0) ,\dots ,(m_r -1 ,0)}]$ replacing $y$ by $x$.\\

A resonant word must mix letters with positive weight and negative weight. Assume that a given word begins with some $(0,n_1 -1),\dots ,(0,n_k -1)$ for a given $k\in \N^*$. One must have at least one operator of the form $B_{(m_1 -1 ,0)}$ in order to have a resonant sequence. As $[B_{(0,n_1 -1),\dots ,(0,n_k -1)} ]$ depends only on $y \partial_y $ and $B_{(m_1 -1 ,0)}$ of $x\partial_x$, the corresponding Lie bracket is zero. Then all $[B_{\mathbf{n}} ]$ such that $\omega (\mathbf{n} )=0$ is zero. This concludes the proof.
\end{proof}

\section{Uniform isochronism}

Following R. Conti \cite{conti}, an isochronous center of a vector field $X$ is said to be {\it uniform} if all the periodic orbits have the same period. This condition is equivalent to (see \cite{conti},$\S$.19, Definition 19.1 p.28) the following relation on the polynomial $P$:
\begin{equation}\tag{UI}
yP(x,y)=x\overline{P(x,y )} .
\end{equation}
This conditions induces specific relations on the coefficients of the polynomial:

\begin{lem}
A polynomial $P$ of degree $d$ satisfies the uniform isochronicity condition (UI) if and only if the coefficients $p_{i,j}$, $i+j=n$, $n=2,\dots ,d$ satisfy 
\begin{equation}
\label{cui}
p_{0,n}=0 \ \mbox{\rm and}\ \ 
p_{i,n-i} =\overline{p}_{n-i+1 ,i-1} ,\ n=2,\dots ,d,\ i=1,\dots ,n .
\end{equation}
\end{lem}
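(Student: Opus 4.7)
The plan is to unwind the identity (UI) as a formal polynomial identity in the two independent variables $x$ and $y$ and then equate coefficients monomial by monomial. Since the relation splits by total degree, I would first observe that (UI) holds if and only if it holds separately for each homogeneous component $P_n(x,y) = \sum_{i+j=n} p_{i,j} x^i y^j$ of $P$, for $n = 2, \dots, d$, and then treat a fixed degree $n$.

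The two sides restricted to $P_n$ expand as
\begin{equation}
y P_n(x,y) = \sum_{i+j=n} p_{i,j}\, x^i y^{j+1},
\end{equation}
while, using that conjugation acts by conjugating the coefficients together with $\bar x = y$, $\bar y = x$,
\begin{equation}
x\,\overline{P_n(x,y)} = x \sum_{i+j=n} \bar p_{i,j}\, y^i x^j = \sum_{i+j=n} \bar p_{i,j}\, x^{j+1} y^i.
\end{equation}
Both sides are homogeneous of degree $n+1$, so the identity is equivalent to agreement of the coefficient of every monomial $x^a y^b$ with $a+b = n+1$.

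Reading off this coefficient, the left-hand side contributes $p_{a,b-1}$ when $b \geq 1$ (and $0$ when $b=0$), while the right-hand side contributes $\bar p_{b,a-1}$ when $a \geq 1$ (and $0$ when $a=0$). The two extreme monomials give the boundary conditions: taking $a = 0, b = n+1$ forces $p_{0,n} = 0$, and taking $a = n+1, b = 0$ forces $\bar p_{0,n} = 0$, both equivalent. For each interior monomial with $a, b \geq 1$, setting $i = a$ (so that $b = n+1-i$ and $b-1 = n-i$) produces exactly the relation
\begin{equation}
p_{i,n-i} = \bar p_{n-i+1,\, i-1}, \quad i = 1, \dots, n.
\end{equation}
This establishes the forward direction. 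The converse is immediate: substituting these relations into the two expansions above and reindexing $j \leftrightarrow n-i+1$, $i \leftrightarrow i-1$ in one of the sums shows term-by-term equality.

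The whole argument is a direct coefficient comparison, so there is no real difficulty; the only mild care required is the bookkeeping that brings the monomials $x^i y^{j+1}$ from $yP_n$ and $x^{j+1} y^i$ from $x\overline{P_n}$ to a common indexing before matching, and handling the two boundary monomials $x^{n+1}$ and $y^{n+1}$ separately to extract the condition $p_{0,n} = 0$.
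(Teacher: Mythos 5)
Your proof is correct: the paper states this lemma without proof, and your direct coefficient comparison of $yP_n$ against $x\overline{P_n}$ (using $\bar x = y$, $\bar y = x$), with the two boundary monomials $x^{n+1}$ and $y^{n+1}$ yielding $p_{0,n}=0$ and the interior ones yielding $p_{i,n-i}=\bar p_{n-i+1,i-1}$, is exactly the routine verification the authors leave implicit. The only remark worth adding is that the relations for $i$ and $n-i+1$ are complex conjugates of one another, so the listed family is redundant by a factor of two; this does not affect the equivalence.
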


Using this Lemma and the structure Lemma \ref{structure1} for the Lie algebra generated by the comoulds in $\mathbf{B}(X)$, we deduce the following Lemma: 

\begin{lem}
Let $X$ be a polynomial vector field of the form (\ref{form}) where $P$ is homogeneous of degree $d$. Assume that the coefficients of $P$ satisfy 
\begin{equation}
p_{0,d}=0, \ \ 
p_{i,d-i} =\overline{p}_{d-i+1 ,i-1} ,\ i=1,\dots ,d .
\end{equation}
Moreover, if $d$ is odd with $d=2m+1$, we suppose that $p_{m+1,m} =0$. Then, the vector field $X$ is isochronous.
\end{lem}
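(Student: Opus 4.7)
The plan is to chain together the structural results proved in the preceding sections. Under the stated hypotheses the coefficients satisfy exactly the assumptions of Lemma \ref{structure1}: writing $p_{i,d-i}=\overline{p}_{d-i+1,i-1}$ as $p_{d-i+1,i-1}-\overline{p}_{i,d-i}=0$ recovers the condition $U_i=0$, and $p_{0,d}=0$ is the extra hypothesis. Therefore the Lie algebra $\mathfrak{b}$ generated by $\mathbf{B}(X)$ is nilpotent of order one.

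Applying Lemma \ref{fond3}, the correction reduces to
\begin{equation*}
Carr(X)=\sum_{n\in A(X),\ \omega(n)=0}Carr^{n}B_{n},
\end{equation*}
so the whole question becomes: which single letters of $A(X)$ are resonant, and do their comoulds vanish under the hypotheses? A letter $n=(a,b)$ is resonant iff $\langle n,\lambda\rangle=\xi(a-b)=0$, i.e.\ $a=b$. The letters $(-1,d)$ and $(d,-1)$ are therefore never resonant (they would require $d=-1$), so they contribute nothing regardless of $p_{0,d}$. Among the letters $(i-1,d-i)$, $i=1,\dots,d$, resonance forces $i-1=d-i$, i.e.\ $2i=d+1$.

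I would then split into two cases. If $d$ is even, the equation $2i=d+1$ has no integer solution, no resonant letter exists in $A(X)$, and the sum defining $Carr(X)$ is empty, so $Carr(X)=0$. If $d$ is odd, $d=2m+1$, the unique resonant letter is $(m,m)$, coming from $i=m+1$, and
\begin{equation*}
B_{(m,m)}=x^{m}y^{m}\bigl(p_{m+1,m}\,x\partial_{x}+\overline{p}_{m+1,m}\,y\partial_{y}\bigr).
\end{equation*}
The extra hypothesis $p_{m+1,m}=0$ kills this operator, so again $Carr(X)=0$.

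In both cases $Carr(X)=0$; by the characterisation of linearisability through the triviality of the correction recalled in the introduction (see \cite{ev2}), $X$ is linearisable by an analytic change of coordinates, and hence isochronous by the theorem of Chavarriga quoted above. There is no real obstacle in the argument: Lemmas \ref{structure1} and \ref{fond3} do all the algebraic work, and the only thing to watch is the parity case distinction, which explains the role of the auxiliary assumption $p_{m+1,m}=0$ in odd degree.
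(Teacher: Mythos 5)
Your proof is correct and follows essentially the same route as the paper: Lemma \ref{structure1} gives nilpotence of order one, Lemma \ref{fond3} reduces everything to single resonant letters, and the parity discussion handles the remaining letter in odd degree (you work with $Carr(X)$ where the paper works with $Pran(X)$, but these are interchangeable criteria here). A small bonus: you correctly identify the resonant letter as $(m,m)$ (the letter indexed by $i=m+1$), whereas the paper's proof writes it as $(m+1,m)$, which is a minor indexing slip since that letter has nonzero weight.
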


\begin{proof}
By Lemma \ref{structure1}, the first conditions imply that the Lie algebra $\mathfrak{b}$ is nilpotent of order $1$. We deduce from Lemma \ref{fond3} that any prenormal form is reduced to  
\begin{equation}
Pran (X) = X_{lin} + \di\sum_{n\in A(X),\ \omega (n)=0} Pran^n B_n .
\end{equation}
If $d$ is even, the set of resonant letters is empty and $Pran(X)=X_{lin}$. If $d$ is odd, then we have only one resonant letter in $\mathbf{B} (X)$ given by $n_m =(m+1 ,m)$. Then a prenormal form is given by 
\begin{equation}
Pran (X) = X_{lin} + Pran^{n_m} B_{n_m} .
\end{equation}
As $p_{m+1,m}=0$ we obtain $B_{n_m} =0$ and $Pran(X)=X_{lin}$. This concludes the proof.
\end{proof}

\section{Holomorphic isochronous centers}

Following Conti \cite{conti}, a vector field is said to satisfy the {\it Cauchy-Riemann}\footnote{In \cite{conti}, this condition is stated for the underlying real vector field.} conditions if 
\begin{equation}\tag{CR}
\partial_y P =0 . 
\end{equation}

The Cauchy-Riemann conditions impose strong constraints on the coefficients of $P$: 

\begin{lem}
A polynomial $P$ of degree $d$ satisfies the Cauchy-Riemann conditions (CR) if and only if $p_{i,n-i} =0$ for $i=0,\dots ,n-1$, $n=2,\dots ,d$.
\end{lem}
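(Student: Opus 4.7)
The plan is to prove the lemma by a direct computation of $\partial_y P$ and then read off the condition for it to vanish identically. The statement is essentially a tautology once the partial derivative is expanded term by term, so no deep argument is needed; the only care required is in bookkeeping the indices.

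First I would rewrite $P(x,y) =\sum_{n=2}^{d}\sum_{i=0}^{n} p_{i,n-i} x^i y^{n-i}$, grouping the monomials by total degree $n$. Then I would differentiate:
\begin{equation*}
\partial_y P(x,y) = \sum_{n=2}^{d}\sum_{i=0}^{n} (n-i)\, p_{i,n-i}\, x^i y^{n-i-1}.
\end{equation*}
The monomials $x^i y^{n-i-1}$ appearing on the right-hand side are linearly independent in $\mathbb{C}[x,y]$, so $\partial_y P=0$ is equivalent to the vanishing of every coefficient $(n-i)\, p_{i,n-i}$.

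Then I would analyze this coefficient condition: for $i=n$ the factor $n-i$ is zero, so the term contributes nothing regardless of $p_{n,0}$ (which is why the holomorphic coefficients $p_{n,0}$ are \emph{not} constrained); for $i=0,\dots,n-1$, the factor $n-i$ is a nonzero integer, hence the vanishing forces $p_{i,n-i}=0$. Running this for each $n=2,\dots,d$ yields the stated condition, and conversely these vanishings immediately imply $\partial_y P=0$.

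There is no real obstacle here beyond keeping the indexing consistent with the convention $P=\sum_{2\leq i+j\leq d} p_{i,j} x^i y^j$ used earlier in the paper. The result is a direct algebraic translation of the Cauchy--Riemann condition into coefficient form, and it will be used in the sequel to combine with Lemma~\ref{holom} to produce the holomorphic isochronous center conditions.
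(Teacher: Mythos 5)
Your computation is correct and is exactly the direct coefficient-by-coefficient verification that the paper implicitly relies on (the lemma is stated there without proof). The one point worth making explicit, which you do handle, is that the factor $n-i$ vanishes precisely when $i=n$, so the purely holomorphic coefficients $p_{n,0}$ remain free while all others are forced to zero.
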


We deduce:

\begin{lem}
Let $X$ be a polynomial vector fields of degree $d$ of the form (\ref{form}). Assume that $P$ satisfies the Cauchy-Riemann condition. Then the vector field $X$ is linearisable.
\end{lem}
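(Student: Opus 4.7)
The plan is to mimic closely the argument used for the uniform isochronous case, but with the nilpotency hypothesis replaced by the triviality of the resonant subset $\mathfrak{b}_{res}$. First I would invoke the characterization of linearisability recalled in the introduction: $X$ is linearisable if and only if the formal series attached to the correction $Carr(X) = \sum_{\bullet} Carr^{\bullet} B_{\bullet}$ vanishes. Hence it suffices to prove $Carr(X) = 0$ under the Cauchy--Riemann hypothesis.

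Next I would apply the projection theorem (Theorem 8.1 of \cite{jps}) exactly as in the proof of Lemma \ref{fond3} to rewrite
\begin{equation*}
\sum_{\bullet} Carr^{\bullet} B_{\bullet} = \sum_{r\geq 1} \frac{1}{r} \sum_{\bn \in A^*(X)} Carr^{\bn} [B_{\bn}],
\end{equation*}
where $[B_{\bn}]$ denotes the iterated Lie bracket built from $\bn$. Since $Carr^{\varnothing} = 0$ and $Carr^{\bn} = 0$ for every non-resonant word $\bn$, only words $\bn \in A^*(X)$ with $\omega(\bn) = 0$ contribute to the sum.

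At this point the role of Lemma \ref{holom} becomes decisive. Under the Cauchy--Riemann condition, that lemma asserts that $\mathfrak{b}_{res} = \{0\}$, i.e.\ $[B_{\bn}] = 0$ for every resonant word $\bn \in A^*(X)$. Substituting this into the projected expression, every term of the right-hand side vanishes, whence $Carr(X) = 0$ and $X$ is linearisable.

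The argument is essentially mechanical once the preceding lemmas are in place, and the only delicate point I would need to verify is that the projection theorem is indeed applicable to $Carr^{\bullet}$ (this requires that the series has no constant term, which holds since it represents a vector field, and that the resummation is organized so that each resonant word contributes only through its Lie bracket). The main conceptual step, namely the vanishing of all resonant Lie brackets, is already carried out in Lemma \ref{holom}, so no further computation on the comoulds should be needed.
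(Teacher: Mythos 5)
Your proof is correct and follows essentially the same route as the paper: apply the projection theorem to reduce the series to iterated Lie brackets over resonant words, then invoke Lemma \ref{holom} to conclude that all these brackets vanish. The only (immaterial) difference is that you work with the correction mould $Carr^{\bullet}$ while the paper works with a prenormal mould $Pran^{\bullet}$; both criteria for linearisability are stated as equivalent in the introduction, and the key properties used (vanishing on non-resonant words, no constant term) hold for both.
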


\begin{proof}
Formula (\ref{projprenor}) implies that any prenormal form associated to a mould $Pran^{\bullet}$ can be written as 
\begin{equation}
\label{formu}
Pran(X)=X_{lin} +\di\sum_{r\geq 1} \di\frac{1}{r}\di\sum_{\mathbf{n} \in A^*(X) ,\, l(\mathbf{n} )=r,\, \omega (\mathbf{n} )=0} Pran^{\bullet} [B_{\mathbf{n}}] .
\end{equation}
By definition all the Lie brackets in (\ref{formu}) belong to $\mathfrak{b}_{res}$. Using Lemma \ref{holom}, the Cauchy-Riemann conditions imply that $\mathfrak{b}_{res}$ is trivial. As a consequence, all the Lie brackets $[B_{\mathbf{n}}]$ reduce to zero for $\mathbf{n}\in A^*(X)$ such that $\omega (\mathbf{n} )=0$. We conclude that any prenormal form reduces to the linear one and the vector field $X$ is formally linearisable.
\end{proof}

The conditions on the coefficients correspond to the characterizations of holomorphic isochronous centers.

\section{Linearisability and complexity}

Following J. Ecalle and D. Schlomiuk in \cite{es}, we introduce the following problem :\\

\ni {\bf Minimal complexity of the linearisability problem}: {\it Let $d\in \N^*$, $d\geq 2$ and $X$ a polynomial vector field of degree $d$ given by (\ref{form}). We denote by $\mbox{\rm lin} (d)$ the minimal number of algebraic relations depending on the coefficients of $P$ which induce analytic linearisability. Can we determine a bound or an explicit formula for $\mbox{\rm lin}(d)$ ?} \\

Using our approach, we understand that this number depends on the complexity of the isochronous center conditions. In particular, each condition $C_i$ where $i=1,\dots ,m$ of an isochronous center are determined by a finite family of polynomial $C_i (p)$ of degree $c_i$. A natural notion of {\it complexity} is given by:

\begin{dfn}
Let $S\in \C^m$ be an algebraic set. Consider a representation $(R)$ of $S$. The complexity of the representation $(R)$ is defined by the triplet $(m,P(R),C(R))$, composed by the following data:
\begin{enumerate}
\item The dimension of the ambient space $\C^m$,
\item The number of condition in $(R)$,
\item The maximal degree of polynomials defining the conditions in $(R)$.
\end{enumerate}
\end{dfn} 

In our case, the dimension of the ambient space is fixed by $d$ and is given by the number $m(d)$ of coefficients of a generic polynomial of degree $d$. We then introduce the notion of {\it geometric complexity} for an isochronous condition:

\begin{dfn}
An isochronous condition $C$ is said of geometric complexity $(q,m)$ if it admits a representation made of $q$ polynomial identities of degree at most $m$.
\end{dfn}

The previous notion can be used to refine the question raised by J. Ecalle and D. Schlomiuk:\\

\ni {\bf Minimal geometric complexity of the linearisability problem}: {\it Let $d\in \N^*$, $d\geq 2$ and $X$ a polynomial vector field of degree $d$ given by (\ref{form}). We denote by $\mbox{\rm glin} (d)$ the minimal geometric complexity of an isochronous center condition. Can we find a bound or a formula for $\mbox{\rm glin}(d)$ ?} \\

As expected by J. Ecalle and D. Schlomiuk in \cite{es}, this question is more tractable than the initial one. In particular, our two examples already give conditions for which the minimal degree is attained by an isochronous center condition. As a consequence, we can look over the set of center condition to compute in each case the couple $g(C)$. We have
\begin{equation}
g_{Holo} (d)=(d,1),\ \ g_{Uni}(d)=(d+1,1)\ \mbox{\rm if}\ d\ \mbox{\rm is even and}\ g_{Uni}(d)=(d+2,1)\ \ 
\mbox{\rm if}\ d\ \mbox{\rm is odd.}
\end{equation}
Conditions ensuring the nilpotent character of $\mathfrak{b}$ or the triviality of $\mathfrak{b}_{res}$ are always of degree $1$ as they can be read on the Lie bracket of homogeneous vector fields $B_n$, $n\in A(X)$. Other isochronous center conditions depend on the interplay between moulds and comoulds and generate polynomial identities of degree at least $2$. As a consequence, we are leaded to the following conjecture:\\

\ni {\bf Conjecture}: {\it The number $glin(d)$ is equal to $(d,1)$ corresponding to holomorphic isochronous center.}

\end{document}